\documentclass[a4paper,12pt]{article}
\usepackage{graphicx} 

\usepackage{tikz}

\usepackage{amsmath,amsthm,amscd,amssymb,eucal,mathrsfs}
\usepackage{amsfonts}
\usepackage{latexsym}
\usepackage{fullpage}
\usepackage{dsfont}
\usepackage[all]{xy}

\newtheorem{thm}{Theorem}[section]

\newtheorem{remark}[thm]{Remark}
\newtheorem{definition}[thm]{Definition}

\newtheorem{example}[thm]{Example}

\newtheorem{Remark}[thm]{Remark}

\newtheorem{Proposition}[thm]{Proposition}

\newtheorem{Lemma}[thm]{Lemma}

\newtheorem{Cor}[thm]{Corollary}

\newtheorem{assumption}{Assumption}

\newcommand{\mathset}[1]{{\left\{#1\right\}}}
\newcommand{\absolute}[1]{\left\lvert#1\right\rvert}
\newcommand{\norm}[1]{\left\|#1\right\|}

\DeclareMathOperator{\children}{ch}
\DeclareMathOperator{\diam}{diam}
\DeclareMathOperator{\diag}{diag}

\DeclareMathOperator{\Trace}{Tr}
\DeclareMathOperator{\tr}{tr}
\DeclareMathOperator{\Lip}{Lip}

\title{Vladimirov-Pearson Operators on $\zeta$-regular Ultrametric Cantor Sets}
\author{Patrick Erik Bradley
\\
Institute of Photogrammetry and Remote Sensing (IPF)
\\
Geodetic Institute Karlsruhe (GIK)
\\
Karlsruhe Institute of Technology (KIT)
\\
Email: bradley@kit.edu}
\date{\today}

\begin{document}

\maketitle


\begin{abstract}
A new operator for certain types of ultrametric Cantor sets is constructed using the measure coming from the spectral triple associated with the Cantor set, as well as its zeta function. Under certain mild conditions on that measure, it is shown that it is an integral operator similar to the Vladimirov-Taibleson operator on the $p$-adic integers. Its spectral properties are studied, and the Markov property and kernel representation of the heat kernel generated by this so-called \emph{Vladimirov-Pearson} operator is shown, viewed as acting on a certain Sobolev space. A large class of these operators have a heat kernel and a Green function explicitly given by the ultrametric wavelets on the Cantor set, which are eigenfunctions of the operator. 
\end{abstract}

\section{Introduction}

For the last decades, quite some progress has been achieved in the domain of $p$-adic and ultrametric analysis, whereby the former has received a lot more attention than the latter. One reason is the regularity of the hierarchical structure of $p$-adic domains due to the additive group structure of the $p$-adic number fields together with the fact that their residue fields enforce a regularity on the trees associated with $p$-adic discs. The fractal structure of $p$-adic numbers allows the viewpoint of $p$-adic discs as examples of ultrametric Cantor sets. 
Pearson and Bellissard endow every regular ultrametric Cantor set $C$, i.e.\ every Cantor set together with an ultrametric which induces its topology, with a measure based on  by A.\ Connes coming from a spectral triple on the Cantor set \cite{PB2009}. This gives them the possibility of defining an operator on the Hilbert space of $L^2$-functions on $C$. This approach opens the door towards extending results in $p$-adic analysis to regular ultrametric Cantor sets. In particular, the study of new operators on these which give rise to Markov processes are in the interest of ongoing research by the author.
\newline

The first type of a $p$-adic operator for diffusion on the $p$-adic numbers is the Vladimirov operator, well studied in the book \cite{VVZ1994}, and which is actually already present in the book \cite{Taibleson1975}. Since then, the study of stochastic processes on the $p$-adic numbers emerged, cf.\ e.g.\
\cite{Zuniga2008,Zuniga2017} 
or the book
\cite{Zuniga2025}.
Viewing $p$-adic Brownian motion as a scaling limit has been achieved recently: on the $p$-adic numbers \cite{Weisbart2024}, on  $p$-adic vector spaces \cite{PRSWY2024}, and on the $p$-adic integers \cite{PW2024}.
These results rely on previous work on the study of diffusion processes on $p$-adic sets other than the $p$-adic number fields, of which the $p$-adic integers are a prominent example, cf.\ \cite{Kochubei2018}. Z\'u\~niga-Galindo provided for a method with which diffusion processes within a finite collection of $p$-adic discs can be studied, which jump between distinct discs \cite{ZunigaNetworks}. This became the basis of the author's recent work for defining and studying diffusion on spaces which are locally $p$-adic discs, i.e.\ the $p$-adic points of Mumford curves, cf.\ \cite{brad_HeatMumf,brad_thetaDiffusionTateCurve,SchottkyDiff}. In \cite{BL_shapes_p}, $p$-adic diffusion was used in order to hear the shape of a graph, in particular that of the skeleton of a Mumford curve, and in  
\cite{HearingGenusMumf}
it was shown that the spectrum of diffusion operators built with automorphic forms and a regular differential $1$-form are sufficient to  recover the genus of a Mumford curve.
\newline

As an application of a Vladimirov-Taibleson or Z\'u\~niga approach, 
new forms of
non-autonomous diffusion on time-dependent graphs or time-varying energy landscapes were developped \cite{nonAutonomousDiffusion,Ledezma_Energy}, and
 diffusion on finite
multi-Topology systems was approximated using ultrametrics \cite{BL-TopoIndex_p}. 
In order to extend the author's recent work to more general ultrametric spaces and include interesting algebraic examples, 
Schottky invariant diffusion on the
transcendent $p$-adic upper half plane was obtained in
\cite{SchottkyTrans_p}.
Work in progress is also the extension of 
Boundary Value Problems for $p$-Adic Elliptic Parisi-Zúñiga Diffusion \cite{ellipticBVP} to the yet to be defined ultrametric manifolds in order to transfer methods from partial differential equations to this setting. That work relies on ideas found in  \cite{BGPW2014,RW2023,RodriguezDiss,
UnitaryDual_p}.
\newline

The $\zeta$-function coming from the spectral triple associated with a regular ultrametric Cantor set $(C,d)$ is used in this article.
It is assumed that $\zeta(s)$ has an abscissa of convergence in $\mathds{R}$.
Furthermore, the Connes measure $\mu_\tau$ is assumed throughout to have the property that the discs defined by the children of a given vertex in the Michon Tree of $(C,d)$ are all of equal measure w.r.t.\ $\mu_\tau$.  
Under these assumptions, the following results are proven in this article:

\begin{enumerate}
\item The new Vladimirov-Pearson operator $\mathcal{D}^s$ with $s\in\mathds{R}$ can be represented as an integral operator on the space of test functions on a $\zeta$-regular ultrametric Cantor set $(C,d)$  [Theorem \ref{LocalCoordinateOperator}]

\item  The ultrametric wavelets defined in \cite{XK2005} are eigenfunctions of $\mathcal{D}^s$, and their corresponding eigenvalues have an explicit description similar to the ones in \cite{XK2005}. [Corollary \ref{LocalCoordinateEigenvalue}]
\item The operator $\mathcal{D}^s$ acting on $L^2(C,\mu_\tau)$ is self-adjoint and positive semi-definite. The Hilbert space $L^2(C,\mu_\tau)$ has
an orthonormal basis consisting of eigenfunctions of $\mathcal{D}^s$. The operator $\mathcal{D}^s$ is bounded if
and only if $s \ge 4$. In this case it is a compact operator, and $0$ is an accumulation point
of the spectrum of $\mathcal{D}^s$. In the case that $s < 4$, the spectrum of $\mathcal{D}^s$ is a point spectrum. [Theorem \ref{LocalCoordinateSpectrum}]
\item The operator $\mathcal{D}^s$ generates a strongly continuous Markov semigroup $e^{-t\mathcal{D}^s}$ ($t\ge0$) on a suitable Sobolev space $W^{1,2}(C)$. [Theorem \ref{MarkovSemigroup}]

\item The semigroup $e^{-t\mathcal{D}^s}$ $(t\ge0)$ has a kernel representation $p_t(x,\cdot)$ for $x\in C$. [Corollary \ref{kernelRepresentation}]

\item In the case that $s<4$, the Vladimirov-Pearson operator $-\mathcal{D}^s$ has a heat kernel which is explicitly representable with the ultrametric wavelet eigenbasis of $L^2(C,\mu_\tau$). [Theorem \ref{LocalCoordinateHeatKernel}]

\item In the case that $s<4$, a Green function exists for $-\mathcal{D}^s$ of an explicit form
using the ultrametric wavelets, provided the sum
\[
\sum\limits_{\lambda>0}\lambda^{-1}
\]
converges, with $\lambda$ ranging over all the positive eigenvalues of $\mathcal{D}^s$. [Corollary \ref{GreenFunction}]
\end{enumerate}

The following Section 2 gives a brief account on regular ultrametric Cantor sets. Section 3 is devoted to the construction and spectral properties of the Vladimirov-Pearson operators from the data provided in \cite{PB2009}. There it is also shown that the $p$-adic transcendent numbers are an example for Cantor sets having an irregular Michon tree for which a Connes measure can be given such that the assumptions of this article are are satisfied. Section 4 contains the proofs of the asserted properties of the semigroup associated with the Vladimirov-Pearson operator acting on the Sobolev space $W^{1,2}(C)$.

\section{Regular Ultrametric Cantor sets}

\begin{definition}
A \emph{Cantor set} is a totally disconnected compact metrisable space without isolated points.
\end{definition}

\begin{Remark}
Notice that up to homeomorphism, there is precisely one Cantor set.
\end{Remark}

\begin{definition}
A \emph{locally compact local ultrametric Cantor set} is a second countable Hausdorff space in which each point has an open neighbourhood which is a Cantor set.
\end{definition}

\begin{definition}
An ultrametric $d$ on a Cantor set $C$ is \emph{regular}, if $d$ generates the topology of $C$. The pair $(C,d)$ is then called a \emph{regular ultrametric Cantor set}.
\end{definition}

To a regular ultrametric Cantor set $(C,d)$, a weighted tree, called the \emph{Michon tree}, can be associated whose boundary is isometrically isomorphic with $C$, cf.\ \cite{Michon1985}.

\begin{example}
A local field $K$ is a locally compact local ultrametric Cantor set. The Michon tree associated with any disc
in $K$ is a regular rooted tree in which the number of child nodes equals the cardinality of the residue field of $K$. This is well-known.
\end{example}

\begin{example}
  The transcendent $p$-adic upper half plane $\Omega_{\tr}$
  consists of the transcendent elements of the $p$-adic upper half plane
  studied in depth in \cite{DT2007} in the context of Shimura curves, cf.\ also 
 \cite{Kuennemann2000}. It 
 is a locally compact local ultrametric Cantor set, because
 it is covered by the
 orbits of  transcendent $p$-adic numbers under the action of the $p$-adic absolute Galois group, and each of these is a pro-finite set \cite[Theorem 3.5]{APZ1998}, meaning that it can be endowed with a regular ultrametric. 
 In \cite{SchottkyTrans_p}, Schottky invariant diffusion operators are constructed on its transcendent part $\Omega_{\tr}$.
This is an example of a regular ultrametric Cantor set whose Michon tree is not regular, and even has no bound on the number of children of vertices, cf.\ the ongoing work \cite{HalwasGaloisOrbitTree}.
\end{example}

The space of test functions is denoted as $\mathcal{D}(C)$ and consists of the locally constant functions $C\to\mathds{C}$.

\section{Vladimir-Pearson operators}

In \cite{PB2009}, a spectral triple $(\mathscr{A},\mathcal{H},D)$ is associated with a regular ultrametric Cantor set $(C,d)$. Namely,
\[
\mathscr{A}=\mathcal{C}_{\Lip}(C),\quad\mathcal{H}=\ell^2(V)\otimes\mathds{C}^2,\quad D\psi(v)=\diam([v])^{-1}\sigma_1\psi(v),
\]
where $\mathcal{C}_{\Lip}(C)$ is the space of Lipshitz-continuous functions $C\to\mathds{C}$, $V$ is the vertex set of the Michon tree associated with $(C,d)$, $[v]$ is the ultrametric disc in $C$ associated with vertex $v\in V$,
and 
\[
\sigma_1=\begin{pmatrix}0&1\\1&0\end{pmatrix}
\]
is the first Pauli matrix. Operator $D$ is the Dirac operator of the spectral triple $(\mathscr{A},\mathcal{H},D)$.
\newline

According to \cite[\S 6.1]{PB2009}, the absolute $\absolute{D}$ of the Dirac operator $D$ exists and has the form
\[
\absolute{D}\psi (v)=\diam([v])^{-1}\psi(v)
\]
and is invertible, yielding the $\zeta$-function
\[
\zeta(s)=\frac12\Trace\left(\absolute{D}^{-s}\right)
=\sum\limits_{v\in V}\diam([v])^s
\]
as a Dirichlet series. Cantor set $(C,d)$ is called \emph{$\zeta$-regular}, if $\zeta(s)$ converges on the half plane $\Re(s)>s_0$ for some $s_0\in\mathds{R}$ (called the \emph{abscissa of convergence}), and if for any $f\in\mathcal{C}(C)$ and $\tau\in \Upsilon(C)$, the limit
\[
\lim\limits_{s\downarrow s_0}(s-s_0)\Trace\left(\absolute{D}^{-s}\pi_\tau(f)\right)
\]
exists. Here, $\tau\colon V\to C\times C$ takes any vertex $v$ to a pair $(x,y)\in [v]\times [v]$ such that 
\[
\diam([v])=d(x,y),
\]
and is called a \emph{choice function}. The set of choice functions for $C$ is denoted as $\Upsilon(C)$. The expression $\pi_\tau$ is a certain represenation of $\mathcal{A}$ associated with $\tau\in\Upsilon(C)$ as follows:
\newline

Given a choice function $\tau\in\Upsilon(C)$,  representation
\[
\pi_\tau\colon\mathcal{A}\to\mathcal{B}(\mathcal{H})
\]
is constructed via
\[
\pi_\tau(f)\psi(v)
=\diag\left(f(\tau_+(v)),f(\tau_-(v))\right)\psi(v),
\]
where 
\[
\tau(v)=(\tau_+(v),\tau_-(v))
\]
for $v\in V$. This representation is a faithful $*$-representation \cite[Prop.\ 7]{PB2009}.
The measure $\mu_\tau$ on $(C,d)$ will be named \emph{Connes measure}.
\newline

Assume now that $(C,d)$ is $\zeta$-regular.
The measure on $C$ defined by \cite[\S 7.1]{PB2009} is then  given by
\begin{align}\label{ConnesMeasure}
\mu_\tau(f)=\int_C f\,d\mu_\tau
=\lim\limits_{s\downarrow s_0}\frac{\Trace\left(\absolute{D}^{-s}\pi_\tau(f)\right)}{\Trace\left(\absolute{D}^{-s}\right)}
\end{align}
for $f\in\mathcal{C}(C)$ and a fixed $\tau\in\Upsilon(C)$.
This defines a probability measure on $C$ which is independent of the choice of a choice function $\tau\in\Upsilon(C)$, cf.\ \cite[Thm.\ 3]{PB2009}.
\newline

In order to obtain an operator for a $\zeta$-regular ultrametric Cantor set $(C,d)$, it is first observed that $\mu_\tau$ induces a probability measure $\nu$ on $\Upsilon(C)$, and a Dirichlet form
\[
Q_s(f,g)
=\frac12\int_{\Upsilon(C)}
\Trace\left\{\absolute{D}^{-s}[D,\pi_\tau(f)]^*[D,\pi_\tau(g)]\right\}d\nu(\tau)
\]
on $L^2(C,\mu_\tau)$, which is closable with dense domain, cf.\ \cite[Thm.\ 4]{PB2009}. 
\newline

Supposing further that the support of $\mu_\tau$ is $C$, the theory of semigroups gives a Markovian semigroup corresponding to the Dirichlet form $Q_s$ for any $s\in\mathds{R}$ \cite[Theorem 1.4.1]{Fukushima1980}. This allows for a self-adjoint, non-positive operator $\Delta_s$ via
\[
\langle-\Delta_s f,g\rangle
=Q_s(f,g)
\]
for $f,g$ linear combinations of indicator functions of vertex discs $[v]$ in $C$, and such that $e^{t\Delta_s}$, $t\ge0$, is the corresponding Markov semigroup.
The spectrum of $\Delta_s$ is pure point, cf.\ \cite[Prop.\ 9]{PB2009}.
\newline

Generalising the observation of \cite[Prop.\ 10]{PB2009}, make the following definition.

\begin{definition}
The operator, defined as
\[
\mathcal{D}^sf(z)
=\frac12\lim\limits_{v\to z}\mu_\tau([v])^{-1}\langle 1_{[v]},-\Delta_sf\rangle
=\langle\delta_z,-\Delta_s f\rangle
\]
for $s\in\mathds{R}$ and  test functions $f\colon C\to\mathds{C}$, is called the \emph{Vladimirov-Pearson operator} on the regular Cantor set $(C,d)$.
\end{definition}

In what follows, it is important that $\mu_\tau$ be an equity measure, i.e.\ the value $\mu_\tau([v])$ is the same for all child vertices $v$ of a given vertex $v_0$.
Define for a given vertex $v$ of $T(C)$, the following quantity:
\begin{align*}
\kappa_v(s)=\sum\limits_{w\preceq v}\diam([w])^s
\end{align*}
for $s>s_0$, where
$w$ runs through the descendants of $v$, and
$s_0$ is the abscissa of convergence of $\zeta(s)$.
\newline

Sometimes,  $(C,d)$ is a $\zeta$-regular ultrametric Cantor set satisfying the condition
\begin{align}\label{factorProperty}
\kappa_v(s)=\diam([v])^{s-s_0+1}\cdot\zeta(s)
\end{align}
for all $v\in V(T(C))$, and $s>s_0$.

\begin{definition}
An ultrametric Cantor set satisfying  (\ref{factorProperty}) is said to satisfy the \emph{factorisation property}.
\end{definition}

\begin{example}[$p$-adic integers]\label{Zp}
Let
$C=\mathds{Z}_p$, and $[v]=p^\ell\mathds{Z}_p$. Then
\[
\zeta(s)=\sum\limits_{k=0}^\infty p^kp^{-ks}
\]
with abscissa of convergence 
$s_0=1$. The factorising property  (\ref{factorProperty}) is satisfied for the $p$-adic distance, since
\[
\kappa_v(s)=p^{-\ell s}\zeta(s)
\]
for $s>1$.
Hence,
\[
\mu_\tau([v])=\lim\limits_{s\downarrow 1}\frac{\kappa_v(s)}{\zeta(s)}=p^{-\ell}
\]
yields the usual Haar measure on $\mathds{Z}_p$.
\end{example}

\begin{example}[Level-regularity]\label{levelRegular}
Let $T$ be an infinite rooted tree without dangling vertices, and assume that at for the vertices at a given level $\ell$ (i.e.\ distance from root), the number of child nodes  always takes the same value $p_\ell\in\mathds{N}$. We call this \emph{level-regularity}. Assume that the ultrametric $d(x,y)$ on $\partial T$ also depends only on the level of $x\wedge y$, and takes the value
\begin{align}\label{productEll}
d(x\wedge y)=\diam([v_\ell])=p_0^{-1}\cdots p_{\ell-1}^{-1}\,,
\end{align}
where the level of $x\wedge y$ equals $\ell$, and if $\ell=1$, the product (\ref{productEll}) is the empty product equal to one. 
In this case, it holds true that
\[
\zeta(s)=\sum\limits_{k=0}^\infty p_0\cdots p_{k-1}\diam(v_k)^s=\sum\limits_{k=0}^\infty
p_0\cdots p_{k-1}\cdot p_0^{-s}\cdots p_{k-1}^{-s}
\,,
\]
where $v_k\in V(T)$ is any vertex at level $k\in\mathds{N}$.
Then
\begin{align*}
\kappa_v(s)&=\sum\limits_{k=\ell}^\infty 
p_0\cdots p_{k-\ell-1}
 \diam(v_k)^s
 \\
& =\sum\limits_{k=0}^\infty
 p_0\cdots p_{k-1}
 \cdot p_0^{-s(1+\ell)}\cdots p_{k-1}^{-s(1+\ell)}
 \\
& =p_0^{-s}\cdots p_{\ell-1}^{-s}\cdot\zeta(s)
=\diam([v_\ell])^s\cdot\zeta(s)\,,
\end{align*}
and  (\ref{factorProperty}) is satisfied, provided $s_0=1$.
\end{example}

\begin{definition}
An ultrametric  Cantor set $(C,d)$ set is said to be \emph{equitising}, if the diameters of the children of a given vertex of $T(C)$ are all the same. 
\end{definition}

\begin{assumption}\label{factorUltrametric}
It is assumed that $(C,d)$ is an equitising $\zeta$-regular ultrametric Cantor set.
\end{assumption}

\begin{remark}
Any ultrametric $d(x,y)$ on the boundary $\partial T$ of a rooted tree $T$ without dangling nodes, which depends only on the geodesic path from root to the parent vertex of   $x\wedge y\in V(T)$ for $x\neq y$ in $\partial T$, yields an equitising ultrametric Cantor set. Examples \ref{Zp} and \ref{levelRegular} provide examples of equitising ultrametric Cantor sets.
\end{remark}

\begin{thm}\label{LocalCoordinateOperator}
Let $(C,d)$ be an equitising $\zeta$-regular Cantor set. Then $\mu_\tau$ is an equity measure, and
the Vladimirov-Pearson operator on the space of test functions $\mathcal{D}(C)$ has the form
\[
\mathcal{D}^sf(z)
=\int_{C}
\frac{d(z,y)^{s-3}(f(z)-f(y))}{\mu_\tau(S_{d(z,y)}(z))}\,d\mu_\tau(y)
\]
for $s\in\mathds{R}$,
where $a\wedge b$ is the join of $a,b\in C$ in the Michon tree $T(C)$, $\children(v)$ the set of child nodes of vertex $v\in V$, and $S_\epsilon(x)$ the sphere of radius $\epsilon>0$ centred in $x\in C$.
\end{thm}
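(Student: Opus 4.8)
The statement has two parts: that $\mu_\tau$ is an equity measure, and the displayed integral representation of $\mathcal{D}^s$ on $\mathcal{D}(C)$. For the first part, I would unwind the Connes--measure limit (\ref{ConnesMeasure}) for $f=1_{[v]}$. The operator $\absolute{D}^{-s}\pi_\tau(1_{[v]})$ acts on $\ell^2(V)\otimes\mathds{C}^2$ as multiplication by $w\mapsto\diam([w])^s\,\diag\!\big(1_{[v]}(\tau_+(w)),1_{[v]}(\tau_-(w))\big)$, and since $1_{[v]}(\tau_\pm(w))=1$ for every descendant $w$ of $v$ while the finitely many ancestors of $v$ contribute only bounded terms that disappear in the ratio (because $\zeta(s)\to\infty$ as $s\downarrow s_0$), one obtains
\[
\mu_\tau([v])=\lim_{s\downarrow s_0}\frac{\kappa_v(s)}{\zeta(s)}.
\]
It then remains to check that this value is the same for all children of a fixed vertex, which is precisely where the equitising hypothesis enters, forcing the local zeta functions $\kappa_v(s)$ of sibling subtrees to have a common residue at $s_0$ (alternatively one may invoke the standing hypothesis on $\mu_\tau$ from the introduction). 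In either case one records the consequence to be used below: if $v$ has $n$ children and $x$ lies in a child disc $[\widehat v]$, then $\mu_\tau(S_{\diam([v])}(x))=\mu_\tau([v])-\mu_\tau([\widehat v])=\mu_\tau([v])(1-\tfrac1n)$ depends only on $v$ and not on which child contains $x$.

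For the integral formula I would first rewrite $-\Delta_s$ as an honest integral operator on $C\times C$. A direct computation with the first Pauli matrix — via the identity $\sigma_1\diag(a,b)-\diag(a,b)\sigma_1=(a-b)J$ with $J\in M_2(\mathds{C})$ a fixed antisymmetric unitary — gives $[D,\pi_\tau(f)]\psi(w)=\diam([w])^{-1}(f(\tau_+(w))-f(\tau_-(w)))\,J\psi(w)$, whence, since $J^*J=\mathrm{id}$, the operator $[D,\pi_\tau(f)]^*[D,\pi_\tau(g)]$ is scalar multiplication by $w\mapsto\diam([w])^{-2}\,\overline{(f(\tau_+(w))-f(\tau_-(w)))}\,(g(\tau_+(w))-g(\tau_-(w)))$ (times $\mathrm{id}_{\mathds{C}^2}$). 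Taking the trace over $\ell^2(V)\otimes\mathds{C}^2$ and integrating over $\Upsilon(C)$ yields
\[
\langle-\Delta_s f,g\rangle=Q_s(f,g)=\sum_{w\in V}\diam([w])^{s-2}\int\overline{(f(x)-f(y))}\,(g(x)-g(y))\,d\nu_w(x,y),
\]
where $\nu_w$ is the marginal of $\nu$ on $\{(x,y)\in[w]^2:x\wedge y=w\}$. Using that $\mu_\tau$ is an equity measure, $\nu_w$ is symmetric and is a constant multiple of $\mu_\tau\otimes\mu_\tau$ restricted to that set; substituting this, and noting that the sets $\{x\wedge y=w\}$ partition $C\times C$ off the diagonal (which is $\mu_\tau\otimes\mu_\tau$-null since $\mu_\tau$ has no atoms), one arrives at
\[
\langle-\Delta_s f,g\rangle=\int_C\int_C K(x,y)\,\overline{(f(x)-f(y))}\,(g(x)-g(y))\,d\mu_\tau(x)\,d\mu_\tau(y),\qquad K(x,y)=\frac{d(x,y)^{s-3}}{\mu_\tau(S_{d(x,y)}(x))},
\]
the point being that the vertex-dependent constant combines with $\diam([w])^{s-2}$ to produce exactly this kernel, and that the equity recorded above makes $\mu_\tau(S_{d(x,y)}(x))$ depend on $x$ only through the disc $[x\wedge y]$, so that $K$ is a genuine symmetric kernel.

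It remains to pass from $-\Delta_s$ to $\mathcal{D}^s$. Taking $g=1_{[v]}$ and using the symmetry of the integrand in $x,y$ (the contribution of $[v]\times[v]$ vanishes), $\langle 1_{[v]},-\Delta_s f\rangle=2\int_{[v]}\int_{C\setminus[v]}K(x,y)(f(x)-f(y))\,d\mu_\tau(y)\,d\mu_\tau(x)$. The key observation is that for $x\in[v]$, $y\notin[v]$ and any $z\in[v]$ one has $x\wedge y=z\wedge y$ — the geodesics from the root to $x$ and to $z$ agree all the way down to $[v]$, while that to $y$ branches off earlier — so $K(x,y)=K(z,y)$ on this region; averaging over $x\in[v]$ then gives $\tfrac12\mu_\tau([v])^{-1}\langle 1_{[v]},-\Delta_s f\rangle=\int_{C\setminus[v]}K(z,y)\big(\mu_\tau([v])^{-1}\!\int_{[v]}f\,d\mu_\tau-f(y)\big)\,d\mu_\tau(y)$. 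Since $f$ is a test function it is constant, equal to $f(z)$, on a neighbourhood of $z$, so for $[v]$ small enough the average equals $f(z)$ and the integrand vanishes near $z$; hence only finitely many spherical shells contribute and the limit $v\to z$ is harmless, giving
\[
\mathcal{D}^s f(z)=\langle\delta_z,-\Delta_s f\rangle=\int_C K(z,y)\,(f(z)-f(y))\,d\mu_\tau(y)=\int_C\frac{d(z,y)^{s-3}(f(z)-f(y))}{\mu_\tau(S_{d(z,y)}(z))}\,d\mu_\tau(y),
\]
as claimed. The step I expect to be the main obstacle is the normalisation bookkeeping for $\nu_w$: one must determine the precise constant relating the Pearson--Bellissard measure $\nu$, localised at the vertex $w$, to $\mu_\tau\otimes\mu_\tau$, and verify that after multiplication by $\diam([w])^{s-2}$ it produces exactly the exponent $s-3$ together with the sphere-measure denominator rather than some other combination of $\diam([w])$ and $\mu_\tau([w])$; it is here — and, via the shell computation of $\mu_\tau(S_{d(x,y)}(x))$, essentially only here — that the equitising/equity hypothesis does the real work, through the equal diameters and equal measures of the children of each vertex.
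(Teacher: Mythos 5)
Your overall route is the same as the paper's: express $\langle 1_{[v]},-\Delta_s f\rangle$ as a sum over vertices of double integrals against a vertex-indexed kernel, use the equity of $\mu_\tau$ to identify the normalising denominator with a sphere measure, and pass to the limit $v\to z$ using local constancy of $f$. Your treatment of the equity of $\mu_\tau$ via $\mu_\tau([v])=\lim_{s\downarrow s_0}\kappa_v(s)/\zeta(s)$ is in fact more explicit than the paper's (which simply asserts it), and your reduction of the commutator $[D,\pi_\tau(f)]$ via $\sigma_1$ correctly reproduces the Dirichlet form that the paper imports wholesale from \cite[\S 8.3]{PB2009}.

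However, there is a genuine gap, and it sits exactly where you flagged it: the normalisation bookkeeping. The computation you set up yields the kernel
\[
\frac{\diam([w])^{s-2}}{\sum_{(u,u')\in\mathcal{G}_w}\mu_\tau([u])\mu_\tau([u'])}
=\frac{d(x,y)^{s-2}}{\mu_\tau([x\wedge y])\,\mu_\tau\bigl(S_{d(x,y)}(x)\bigr)},
\]
since for a vertex $w$ with $n$ children of equal measure $m$ the sum over distinct ordered pairs is $n(n-1)m^2=\mu_\tau([w])\,\mu_\tau(S_{\diam([w])}(x))$. This has exponent $s-2$ and an extra factor $\mu_\tau([x\wedge y])$ in the denominator, not the asserted $d(x,y)^{s-3}/\mu_\tau(S_{d(x,y)}(x))$. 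The missing link is the identity $\mu_\tau([w])=\diam([w])$, which the paper uses silently in the chain $\mu([w])\mu(S_{\diam([w])}(z))=\diam([w])\mu(S_{\diam([w])}(z))$ and which converts $d(x,y)^{s-2}/\mu_\tau([x\wedge y])$ into $d(x,y)^{s-3}$. You never state or justify this identity, and your claim that the constant ``combines with $\diam([w])^{s-2}$ to produce exactly this kernel'' is precisely the assertion that needs proof; it does not follow from equity alone but requires the measure of a disc to coincide numerically with its diameter (as it does in Examples \ref{Zp} and \ref{levelRegular}, where $s_0=1$). Everything downstream of this point in your argument --- the restriction to $[v]\times([v])^c$, the observation that $x\wedge y=z\wedge y$ for $x,z\in[v]$ and $y\notin[v]$, and the harmlessness of the limit $v\to z$ --- agrees with the paper and is fine.
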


\begin{proof}
Let $\mu=\mu_\tau$. First,  observe immediately  that the equitising property implies that
$\mu$ is an equity measure.

\smallskip
From 
\cite[\S 8.3]{PB2009}
take that
\begin{align*}
\langle-\Delta_s 1_{[v]},f\rangle
&=\sum\limits_{w\succ v}
\frac{\diam([w])^{s-2}}{\sum\limits_{(u,u')\in\mathcal{G}_w}\mu([u])\mu([u'])}
\\
&\cdot 2\int_{[v]}d\mu(x)
\int_{[w]\cap[u_v]^c}
(f(x)-f(y))\,d\mu(y),
\end{align*}
where for $w\succ v$, vertex $u_v$ is the child of $w$ which is also an ancestor of vertex $v$,
and where $\mathcal{G}_w$ is the set of distinct pairs of children of vertex $w\in V$.
Since $\mu=\mu_\tau$ is an equity measure, it holds true that
\begin{align*}
\sum\limits_{(u,u')\in\mathcal{G}_w}\mu([u])\mu([u'])&=
\absolute{\children(z\wedge y)}
\left(\absolute{\children(z\wedge y)}-1\right)\cdot\mu([u_v])^2
\\
&=\mu([w])\mu\left(S_{\diam([w])}(z)\right)
\\
&=\diam([w])\mu\left(S_{\diam([w])}(z)\right)
\end{align*}
for $w\succ v$ with $z\in[v]$, and where $S_r(z)\subset C$ is the ultrametric sphere of radius $r>0$ (assumed to be a possible distance value) centred in $z\in C$.
Hence,
\begin{align*}
\langle-\Delta_s1_{[v]},g\rangle&
=\sum\limits_{w\succ v}
\frac{\diam([w])^{s-3}}{\mu\left(S_{\diam([w])}(z)\right)}\cdot
2\int_{[v]}d\mu(x)
\int_{[w]\cap[u_v]^c}(f(z)-f(x))\,d\mu(y)
\end{align*}
It now follows that
\begin{align*}
\frac{1}{2\mu([v])}\langle-\Delta_s1_{[v]},g\rangle
\end{align*}
converges for $v\to z$ to
\begin{align*}
\mathcal{D}^sf(z)&=
\sum\limits_{w\succ z}\frac{\diam([w])^{s-3}}{\mu\left(S_{\diam([w])}(z)\right)}
\cdot\int_{S_{\diam([w])}(z)}
(f(z)-f(y))\,d\mu(y)
\\
&=\int_{C\setminus\mathset{z}}\frac{d(z,y)^{s-3}}{\mu\left(S_{d(z,y)}(z)\right)}
(f(z)-f(y))\,d\mu(y),
\end{align*}
where the last equality follows from integrating over
 $y\in[w]$ such that $w=y\wedge z$ for all $w\succ z$ which covers all of $C$.
This proves the assertion, since $f\in\mathcal{D}(C)$ is constant near $z\in C$.
\end{proof}

Ultrametric wavelets are defined in \cite[Eq.\ (6)]{XK2005}, and can be written
in the setting of an ultrametric Cantor set $(C,d)$ as
\[
\psi_{v,j}(x)=\mu([v])^{-\frac12}e^{2\pi ij x_v/s_v}1_{[v]}(x)\,,
\] 
for $x\in C$,
where $v\in V(T(C))$ is a vertex,
$j=1,\dots,s_v$, and $s_v$ is the number of children of the parent vertex of $v$ in $T(C)$. The quantity $x_v\in\mathset{1,\dots,s_v}$ is defined via a numbering of the children of the parent vertex of vertex $v$ in the rooted tree $T(C)$.

\begin{Cor}\label{LocalCoordinateEigenvalue}
Under Assumption \ref{factorUltrametric},
the ultrametric wavelets are eigenfunctions of $\mathcal{D}^s$.
The eigenvalue corresponding to an ultrametric wavelet $\psi_{v,j}$ with $v\succ z\in C$ and $j=1,\dots,s_v$  equals
\[
\lambda_v = \frac{\mu_\tau([v])}{1-s_{v}^{-1}}
\cdot d(v,v^\sigma)^{s-4}
+\int_{C\setminus[v]}\frac{d(z,y)^{s-4}}{1-p_{y\wedge z}^{-1}}\,d\mu_\tau(y)
\]
where $v^\sigma$ is any sibling of $v$  distinct from $v$ in $T(C)$, and $p_w$ equals the number of children of vertex $w$ $T(C)$.
\end{Cor}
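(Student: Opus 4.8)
The plan is to compute $\mathcal D^s\psi_{v,j}$ pointwise straight from the integral representation of Theorem \ref{LocalCoordinateOperator}, exploiting three features of an ultrametric wavelet: writing $\mu=\mu_\tau$ and letting $\hat v$ be the parent of $v$ in $T(C)$, the function $\psi_{v,j}$ lies in $\mathcal D(C)$, is supported on the disc $[\hat v]$, is locally constant on each of the $s_v$ child discs of $[\hat v]$ (one of them being $[v]$), and satisfies $\int_{[\hat v]}\psi_{v,j}\,d\mu=0$ — the last holding because by Theorem \ref{LocalCoordinateOperator} $\mu$ is an equity measure, so those child discs all have measure $\mu([\hat v])/s_v$, and $\sum_{k=1}^{s_v}e^{2\pi ijk/s_v}=0$. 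First I would rewrite the operator: starting from the formula of Theorem \ref{LocalCoordinateOperator} and using $d(z,y)=\diam([z\wedge y])$ together with $\mu(S_{d(z,y)}(z))=\mu([z\wedge y])(1-p_{z\wedge y}^{-1})=d(z,y)(1-p_{z\wedge y}^{-1})$ (as in its proof), one has for $f\in\mathcal D(C)$
\[
\mathcal D^s f(z)=\sum_{w\succ z}\frac{\diam([w])^{s-4}}{1-p_w^{-1}}\int_{[w]\setminus[u_w]}\bigl(f(z)-f(y)\bigr)\,d\mu(y),
\]
where $u_w$ is the child of $w$ on the geodesic from $w$ to $z$, so that $[w]\setminus[u_w]=S_{\diam([w])}(z)$; applied to a test function this sum has only finitely many nonzero terms.

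Next I would fix $z\in[v]$, put $f=\psi_{v,j}$, and split the sum according to the position of $w$ relative to the support $[\hat v]$. If $[w]\subseteq[v]$ (that is, $w=v$ or a descendant of $v$ on the path to $z$), then $[w]\setminus[u_w]\subseteq[v]$, on which $\psi_{v,j}$ is the constant $\psi_{v,j}(z)$, so the term vanishes. The single term $w=\hat v$, for which $[w]\setminus[u_w]=[\hat v]\setminus[v]$, is where the mean-zero property enters: using $\int_{[\hat v]\setminus[v]}\psi_{v,j}\,d\mu=\int_{[\hat v]}\psi_{v,j}\,d\mu-\int_{[v]}\psi_{v,j}\,d\mu=-\psi_{v,j}(z)\mu([v])$ and $\mu([\hat v]\setminus[v])+\mu([v])=\mu([\hat v])$, this term equals $\dfrac{\mu([\hat v])}{1-s_v^{-1}}\diam([\hat v])^{s-4}\,\psi_{v,j}(z)$. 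For each strict ancestor $w\succ\hat v$ the sphere $[w]\setminus[u_w]$ is disjoint from $[\hat v]$, so there $\psi_{v,j}(z)-\psi_{v,j}(y)=\psi_{v,j}(z)$ and, using $\mu([w]\setminus[u_w])=\mu([w])(1-p_w^{-1})$, the term is $\diam([w])^{s-4}\mu([w])\,\psi_{v,j}(z)$. Adding these up gives $\mathcal D^s\psi_{v,j}(z)=\lambda_v\psi_{v,j}(z)$ with $\lambda_v=\dfrac{\mu([\hat v])}{1-s_v^{-1}}\diam([\hat v])^{s-4}+\sum_{w\succ\hat v}\diam([w])^{s-4}\mu([w])$.

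It then remains to put $\lambda_v$ into the asserted shape and to check the eigenrelation off $[v]$. For the shape, the identity $\dfrac{\mu([\hat v])}{1-s_v^{-1}}=\dfrac{\mu([v])}{1-s_v^{-1}}+\mu([\hat v])$ (valid since $\mu([v])=\mu([\hat v])/s_v$) moves one copy of $\diam([\hat v])^{s-4}\mu([\hat v])$ into the sum, making it $\sum_{w\succeq\hat v}\diam([w])^{s-4}\mu([w])$; decomposing $C\setminus[v]$ into the spheres $[w]\setminus[u_w]$ over $w\succeq\hat v$ and again using $\mu([w]\setminus[u_w])=\mu([w])(1-p_w^{-1})$ identifies this sum with $\int_{C\setminus[v]}\dfrac{d(z,y)^{s-4}}{1-p_{y\wedge z}^{-1}}\,d\mu(y)$; and $\diam([\hat v])=d(v,v^\sigma)$ for any sibling $v^\sigma\neq v$. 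For the eigenrelation at other points: if $z\notin[\hat v]$ then $\psi_{v,j}(z)=0$, and $\mathcal D^s\psi_{v,j}(z)=-\int_{[\hat v]}\tfrac{d(z,y)^{s-4}}{1-p_{z\wedge y}^{-1}}\psi_{v,j}(y)\,d\mu(y)=0$ because $d(z,\cdot)$ and $p_{z\wedge\cdot}$ are constant on $[\hat v]$ and $\int_{[\hat v]}\psi_{v,j}\,d\mu=0$; and if $z$ lies in a child disc of $[\hat v]$ other than $[v]$, the computation above goes through verbatim with that disc in place of $[v]$ and, all child discs of $[\hat v]$ having equal measure, yields the same $\lambda_v$.

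The hard part is not any single computation but the bookkeeping: making sure it is exactly the scale $w=\hat v$ that produces the mean-zero cancellation, while every coarser $w$ gives the ``$\psi_{v,j}$ vanishes there'' contribution and every finer $w$ gives nothing, and keeping the equity identities $\mu([u_w])=\mu([w])/p_w$ consistent throughout — in particular verifying, as above, that $\lambda_v$ is independent of which child disc of $[\hat v]$ contains the base point $z$, so that $\psi_{v,j}$ is a genuine eigenfunction on all of $C$ and not merely on $[v]$.
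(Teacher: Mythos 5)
Your proof is correct, but it takes a genuinely different route from the paper. The paper's proof treats the eigenfunction property as a black box: since the kernel of $\mathcal{D}^s$ is radial (a function of $d(z,y)$ alone), it invokes \cite[Theorem 10]{XK2005} to get both the eigenfunction statement and a first form of the eigenvalue, and then only computes the sphere measure $\mu_\tau(S_{d(x,z)}(z))=(1-p_{x\wedge z}^{-1})d(x,z)$ to reach the asserted expression. You instead verify everything by hand: decomposing $C\setminus\{z\}$ into the spheres $[w]\setminus[u_w]$ over ancestors $w$ of $z$, isolating the unique scale at which the mean-zero property produces the extra $\mu([v])$, and checking the eigenrelation off the supporting disc as well. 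Your version is longer but self-contained, and it buys two things the paper's citation does not: an explicit verification that the eigenrelation holds at \emph{every} point of $C$ (including outside the wavelet's support, where one needs the mean-zero cancellation again), and a confirmation of which reading of the wavelet definition the stated eigenvalue actually corresponds to --- your computation shows the formula is correct precisely when $\psi_{v,j}$ is supported on the parent disc $[\hat v]$ and oscillates over the $s_v$ siblings of $v$, whereas the paper's displayed wavelet formula literally contains the factor $1_{[v]}(x)$, under which the first term of $\lambda_v$ would come out as $\mu_\tau([v])(1-p_v^{-1})^{-1}\diam([v])^{s-4}$ instead. Two small caveats you should make explicit: the identity $\sum_{k=1}^{s_v}e^{2\pi ijk/s_v}=0$, and hence the mean-zero property your argument relies on twice, fails for $j=s_v$ (the trivial character), so that index must be excluded even though the corollary as stated allows it; and your normalisation bookkeeping inherits the paper's mismatch between $\mu([v])^{-1/2}$ and a support of measure $\mu([\hat v])$, which affects orthonormality but not the eigenvalue.
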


\begin{proof}
Since the kernel function of the integral operator $\mathcal{D}^s$ depends on the distance $d(y,z)$, cf.\ Theorem \ref{LocalCoordinateOperator}, the result
\cite[Theorem 10]
{XK2005} can be applied to the setting here. The statement  about the ultrametric wavelet eigenvalue there first translates to
\[
\lambda_{v}
=\mu_\tau([v])\frac{d(v,v^\sigma)^{s-3}}{\mu_\tau(S_{d(z,v^\sigma)}(z))}
+\int_{C\setminus[v]}
\frac{d(z,y)^{s-3}}{\mu_\tau\left(S_{d(z,y)}(z)\right)}\,d\mu_\tau(y)\,,
\]
using Theorem \ref{LocalCoordinateOperator}.
Now, 
calculate the value of $\mu_\tau(S_{d(x,z)}(z))$ for $x\neq z$ in $C$. By the equitising property, this value is immediately seen to be
\begin{align}\label{circleMeasure}
\mu_\tau(S_{d(x,z)}(z))=\left(1-p_{x\wedge z}^{-1}\right)d(x,z)\,,
\end{align}
from which the asserted eigenvalue follows.
Notice that $\lambda_v$ does not depend on the choices of $z\in [v]$, sibling $v^\sigma$, and also not on $j\in\mathset{1,\dots,s_v}$.
\end{proof}

Observe from (\ref{circleMeasure}) 
that the operator $\mathcal{D}^s$ can also be written as
\begin{align}\label{LocalCoordinateLaplacian}
\mathcal{D}^sf(x)=
\int_C\frac{d(x,y)^{s-4}}{1-p_{x\wedge y}^{-1}}(f(x)-f(y))\,d\mu_\tau(y)
\end{align}
for $x\in C$, $s\in\mathds{R}$,
and test functions $f\in\mathcal{D}(C)$,
by inserting into the integral representation of Theorem \ref{LocalCoordinateOperator}.

\begin{thm}\label{LocalCoordinateSpectrum}
Let $(C,d)$ be an equitising $\zeta$-regular Cantor set.
The operator $\mathcal{D}^s$ acting on $L^2(C,\mu_\tau)$ is self-adjoint and positive semi-definite.
The Hilbert space $L^2(C,\mu_\tau)$ has an orthonormal basis consisting of eigenfunctions of $\mathcal{D}^s$. The operator $\mathcal{D}^s$ is bounded if and only if $s\ge 4$. In this case it is a compact operator, and $0$ is an accumulation point of the spectrum of $\mathcal{D}^s$. In the case that $s<4$, the spectrum of $\mathcal{D}^s$ is a point spectrum.
\end{thm}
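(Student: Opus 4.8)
The plan is to exploit the explicit wavelet eigenbasis already produced in Corollary \ref{LocalCoordinateEigenvalue}, and then to analyze the asymptotics of the eigenvalues $\lambda_v$ as the vertex $v$ descends deep into the Michon tree. First I would observe that the ultrametric wavelets $\psi_{v,j}$ ($v\in V(T(C))$, $j=1,\dots,s_v$), together with the constant function $1_C$, form an orthonormal system in $L^2(C,\mu_\tau)$; completeness is the standard ultrametric wavelet argument (the span of the $1_{[v]}$ is dense in $L^2$, and each $1_{[v]}$ minus the average over its parent disc is a finite combination of wavelets). Since by Corollary \ref{LocalCoordinateEigenvalue} every wavelet is an eigenfunction with a real eigenvalue $\lambda_v$, and $\mathcal{D}^s 1_C=0$, the operator $\mathcal{D}^s$ is diagonalized by an orthonormal basis; symmetry of the kernel $d(x,y)^{s-4}/(1-p_{x\wedge y}^{-1})$ in $x,y$ (visible from \eqref{LocalCoordinateLaplacian}) gives formal self-adjointness on $\mathcal{D}(C)$, and the Dirichlet-form origin (the quadratic form $Q_s$ is nonnegative) gives $\lambda_v\ge0$, hence positive semi-definiteness. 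This already yields the first two sentences and reduces everything else to studying the sequence $(\lambda_v)$.

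Next I would establish the boundedness dichotomy. From the formula
\[
\lambda_v=\frac{\mu_\tau([v])}{1-s_v^{-1}}\,d(v,v^\sigma)^{s-4}
+\int_{C\setminus[v]}\frac{d(z,y)^{s-4}}{1-p_{y\wedge z}^{-1}}\,d\mu_\tau(y),
\]
the integral term is bounded above and below by fixed positive constants (the integrand is controlled because $d(z,y)$ ranges over the diameters of the ancestor discs of $v$, all bounded away from $0$ by $\diam([v^{\text{parent}}])$ up to $\diam(C)$, and $1-p_{y\wedge z}^{-1}\ge 1-\tfrac12$; this contribution is essentially $\mathcal{D}^s$ applied near a point "seen from outside $[v]$" and converges). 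The decisive term is the first one. Writing $\mu_\tau([v])\asymp\diam([v])$ (this is the content of the equity/factorisation bookkeeping: by \eqref{ConnesMeasure} and the assumptions, $\mu_\tau([v])$ behaves like $\diam([v])^{\,\text{something}}$; one needs here precisely the normalization used in the proof of Theorem \ref{LocalCoordinateOperator}, where $\sum_{(u,u')\in\mathcal G_w}\mu([u])\mu([u'])=\diam([w])\,\mu(S_{\diam([w])}(z))$) and $d(v,v^\sigma)=\diam([v^{\text{parent}}])$, the first term is of order $\diam([v])^{\,s-4}\diam([v^{\text{parent}}])$ up to bounded factors. As $v\to$ ends of the tree, $\diam([v])\to0$. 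Hence if $s\ge4$ the dominant term stays bounded (indeed $\to$ a finite limit when $s>4$, and is uniformly bounded when $s=4$), so $\sup_v\lambda_v<\infty$ and $\mathcal{D}^s$ is bounded; if $s<4$ then $\diam([v])^{s-4}\to\infty$ along any infinite branch, so $\lambda_v\to\infty$ and $\mathcal{D}^s$ is unbounded. This proves the iff.

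Finally, for $s\ge4$: since $V(T(C))$ is countably infinite and $\lambda_v\to$ (a finite value as $v$ descends; more precisely, for any $\varepsilon>0$ only finitely many discs $[v]$ have $\diam([v])\ge\varepsilon$), the eigenvalue sequence has $0$ as its only possible accumulation point, and each nonzero eigenvalue has finite multiplicity (only finitely many vertices $v$ share a given diameter-plus-parent-data combination — here one invokes that $C$ is compact so the tree is "finite at each level" in the relevant measure-theoretic sense). A self-adjoint operator whose spectrum consists of eigenvalues accumulating only at $0$, each of finite multiplicity, is compact; and since there are infinitely many nonzero $\lambda_v$, $0$ is a genuine accumulation point of the spectrum. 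For $s<4$, $\mathcal{D}^s$ is unbounded but still diagonalized by the orthonormal wavelet basis with real eigenvalues, so on its natural domain $\{f:\sum|\lambda_v|^2|\langle f,\psi_{v,j}\rangle|^2<\infty\}$ it is self-adjoint with pure point spectrum $\overline{\{\lambda_v\}}$. The main obstacle I anticipate is the second step: pinning down the exact growth rate $\mu_\tau([v])\asymp\diam([v])$ and $d(v,v^\sigma)=\diam([v^{\text{parent}}])$ rigorously enough to get a clean threshold at $s=4$ rather than merely an inequality, and in particular checking that at the critical exponent $s=4$ the first term is genuinely \emph{bounded} (not just slowly growing) — this requires using the equitising hypothesis to control $\diam([v^{\text{parent}}])/\diam([v])$, equivalently the branching numbers $p_v$, uniformly, or else arguing that even without such a uniform bound the product $\diam([v])^{s-4}\diam([v^{\text{parent}}])\le\diam([v^{\text{parent}}])^{s-3}$ suffices when $s\ge4$ because $\diam([v])\le\diam([v^{\text{parent}}])$.
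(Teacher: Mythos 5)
The first third of your proposal (wavelet orthonormal eigenbasis plus constants, self-adjointness from symmetry of the kernel, non-negativity of the eigenvalues from the Dirichlet form) matches the paper's proof. Where you genuinely diverge is the boundedness/compactness analysis: the paper argues via boundedness of the kernel function $d(x,y)^{s-4}/(1-p_{x\wedge y}^{-1})$ in \eqref{LocalCoordinateLaplacian} (bounded iff $s\ge 4$, giving a Hilbert--Schmidt operator in that case), whereas you analyse the asymptotics of the eigenvalues $\lambda_v$. That is a reasonable and in principle sharper route, but as written it contains two concrete errors.

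First, in the ``only if'' direction you conclude that $\lambda_v\to\infty$ for $s<4$ because $\diam([v])^{s-4}\to\infty$, but this drops the prefactor $\mu_\tau([v])$ that you yourself identified as $\asymp\diam([v])$ (indeed the paper's proof of Theorem \ref{LocalCoordinateOperator} uses the identity $\mu_\tau([v])=\diam([v])$). The first term of $\lambda_v$ is therefore of order $\diam([v])\cdot\diam([v^{\mathrm{parent}}])^{s-4}\le\diam([v^{\mathrm{parent}}])^{s-3}$, which tends to $0$ along any branch as soon as $s>3$; and the integral term, written out over the ancestors $w$ of $v$ using \eqref{circleMeasure}, equals $\sum_{w}\diam([w])^{s-3}$, which for $3<s<4$ converges in the model case $C=\mathds{Z}_p$ (to $(1-p^{3-s})^{-1}$). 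So for $3<s<4$ your own formulas give \emph{bounded} eigenvalues, and unboundedness does not follow; the threshold your computation actually produces is not $s=4$. Second, in the $s\ge4$ case you correctly observe that the integral term is bounded \emph{below} by a fixed positive constant (the root alone contributes $\diam(C)^{s-3}$), which directly contradicts your subsequent claim that the eigenvalue sequence accumulates only at $0$: every wavelet eigenvalue satisfies $\lambda_v\ge\diam(C)^{s-3}>0$, so the infinitely many eigenvalues must accumulate at some strictly positive value, the ``compactness via $\lambda_v\to0$'' argument collapses, and $0$ is in fact isolated in the spectrum (for $C=\mathds{Z}_p$ and $s=4$ the operator is the explicit non-compact operator $(1-p^{-1})^{-1}(I-P_0)$ with $P_0$ the projection onto constants). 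These are not repairable slips: carried out consistently, your eigenvalue asymptotics are in tension with the boundedness dichotomy, the compactness claim, and the accumulation-at-$0$ claim, so the proof cannot be completed along this route as stated.
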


\begin{proof}
Corollary \ref{LocalCoordinateEigenvalue} shows that the ultrametric wavelets, together with  the constant function $\mu_\tau(C)^{-\frac12}$, form an orthonormal basis of $L^2(C,\mu_\tau)$ consisting of eigenfunctions, where the orthonormality relations between the wavelets are shown in \cite[Theorem 2]{XK2005}, and together with their mean zero property (which is immediate because of the equity property of $\mu_\tau$) yields the asserted orthonormal basis property. This implies the unitarily diagonalisability of $\mathcal{D}^s$ for $s\in\mathds{R}$.

\smallskip
From (\ref{LocalCoordinateLaplacian}),
it follows that the kernel function of $\mathcal{D}^s$ according to Theorem \ref{LocalCoordinateOperator} is bounded if and only if $s\ge 4$.
In this case, Corollary \ref{LocalCoordinateEigenvalue} shows that $0$ is an accumulation point in the spectrum of $\mathcal{D}^s$.

\smallskip
Self-adjointness and positive semi-definiteness
of $\mathcal{D}^s$ for $s\in\mathds{R}$ follow from the  eigendecomposition of  $L^2(C,\mu_\tau)$, and the symmetry of the kernel function.

\smallskip
Since according to Lemma \ref{LocalCoordinateEigenvalue}, the multiplicity of each eigenvalue is finite, and $L^2(C,\mu_\tau)$ is a Hilbert space, compactness follows in the case $s\ge 4$. In the other case of $s<4$, the operator $\mathcal{D}^s$ is unbounded, and its spectrum  contains no accumulation point, hence is a point spectrum. This proves the remaining assertions.
\end{proof}

\section{Vladimirov-Pearson semigroup}

Here, the semigroup associated with a Vladimirov-Pearson operator $\mathcal{D}^s$ is studied.

\begin{Lemma}
The semigroup $e^{-t\mathcal{D}^s}$ acts compactly on $L^2(C,\mu_\tau)$ for $t>0$, where $s<4$.
\end{Lemma}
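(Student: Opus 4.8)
The plan is to exploit the eigendecomposition of $\mathcal{D}^s$ established in Corollary~\ref{LocalCoordinateEigenvalue} and Theorem~\ref{LocalCoordinateSpectrum}. Since $L^2(C,\mu_\tau)$ has an orthonormal basis of eigenfunctions, namely the ultrametric wavelets $\psi_{v,j}$ together with the constant function, and since $\mathcal{D}^s$ is self-adjoint and positive semi-definite, the semigroup $e^{-t\mathcal{D}^s}$ is diagonal in this basis with entries $e^{-t\lambda_v}$, where the $\lambda_v\ge 0$ are the eigenvalues from Corollary~\ref{LocalCoordinateEigenvalue}. A self-adjoint operator given in an orthonormal eigenbasis by a sequence of nonnegative numbers is compact precisely when those numbers accumulate only at $0$; equivalently, only finitely many eigenvalues exceed any given $\varepsilon>0$. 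So the task reduces to showing that for $s<4$ the eigenvalues $\lambda_v$ of $\mathcal{D}^s$ tend to $+\infty$ as the vertex $v$ moves off to infinity in the Michon tree (more precisely, that $\{v : \lambda_v \le R\}$ is finite for every $R$), so that $\{e^{-t\lambda_v} : v\} \cup \{1\}$ accumulates only at $0$.

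First I would recall the explicit eigenvalue formula
\[
\lambda_v = \frac{\mu_\tau([v])}{1-s_v^{-1}}\, d(v,v^\sigma)^{s-4}
+\int_{C\setminus[v]}\frac{d(z,y)^{s-4}}{1-p_{y\wedge z}^{-1}}\,d\mu_\tau(y),
\]
valid for $z\in[v]$. The second term is nonnegative, so $\lambda_v \ge \dfrac{\mu_\tau([v])}{1-s_v^{-1}}\, d(v,v^\sigma)^{s-4} \ge \mu_\tau([v])\, d(v,v^\sigma)^{s-4}$. Now $d(v,v^\sigma)=\diam([v^\dagger])$ where $v^\dagger$ is the parent of $v$, and by the equity property $\mu_\tau([v]) = \mu_\tau([v^\dagger])/s_v$. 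As $v$ ranges over vertices deeper and deeper in the tree, the diameters $\diam([v^\dagger])\to 0$ (the tree has no dangling vertices and $C$ has no isolated points), so with exponent $s-4<0$ the factor $d(v,v^\sigma)^{s-4}\to\infty$. I would combine this with the convergence of $\zeta$ and the equity structure to show the product $\mu_\tau([v])\,\diam([v^\dagger])^{s-4}$ is bounded below by something tending to infinity; concretely, for each $R$ the set of vertices with $\mu_\tau([v])\,\diam([v^\dagger])^{s-4}\le R$ is finite because along any branch the bound eventually fails, and the relevant "initial segment" of the tree is finite (local finiteness of $T(C)$ near the root, which follows from $\zeta$-regularity). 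Hence $\{v:\lambda_v\le R\}$ is finite, giving that $0$ is the only accumulation point of the eigenvalues' images under $t\mapsto e^{-t\lambda}$.

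Finally I would assemble: for fixed $t>0$, the operator $e^{-t\mathcal{D}^s}$ is the norm limit of its finite-rank truncations $\sum_{\lambda_v\le R}e^{-t\lambda_v}\langle\cdot,\psi_{v,j}\rangle\psi_{v,j}$ as $R\to\infty$, since the tail is bounded in operator norm by $\sup_{\lambda_v>R}e^{-t\lambda_v}\to 0$; a norm limit of finite-rank operators is compact, which is the claim. The main obstacle is the quantitative estimate in the previous paragraph: one must be careful that the equitising (equity) hypothesis and $\zeta$-regularity together genuinely force $\mu_\tau([v])\,\diam([v^\dagger])^{s-4}\to\infty$ along every branch — this uses that $\diam([v^\dagger])\to 0$ on every infinite branch together with control of how fast $\mu_\tau([v])$ can shrink relative to $\diam([v])$, which is exactly what the convergence of the Dirichlet series $\zeta(s)=\sum_v\diam([v])^s$ and the defining limit \eqref{ConnesMeasure} for $\mu_\tau$ provide. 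Once that lower bound is in hand the compactness is immediate from the spectral picture of Theorem~\ref{LocalCoordinateSpectrum}.
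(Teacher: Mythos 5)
Your overall architecture is sound and is essentially an expanded version of what the paper compresses into one line: the paper simply asserts that $e^{-t\mathcal{D}^s}$ is ``clearly of trace class'' for $s<4$, whereas you reduce compactness to the weaker (and correctly stated) criterion that the diagonal entries $e^{-t\lambda_v}$ of the semigroup in the wavelet eigenbasis accumulate only at $0$, and then pass to the norm limit of finite-rank truncations. That reduction and the final assembly are fine.

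The genuine gap is exactly the step you flag as ``the main obstacle'', and it does not close the way you suggest. Dropping the nonnegative integral term gives $\lambda_v\ge\mu_\tau([v])\,d(v,v^\sigma)^{s-4}$, and you then claim that the equity structure and $\zeta$-regularity force $\mu_\tau([v])\,d(v,v^\sigma)^{s-4}\to\infty$ along every branch for every $s<4$. This already fails for $C=\mathds{Z}_p$ (Example \ref{Zp}): for $v$ at level $\ell$ one has $\mu_\tau([v])=p^{-\ell}$ and $d(v,v^\sigma)=p^{-(\ell-1)}$, so $\mu_\tau([v])\,d(v,v^\sigma)^{s-4}=p^{\ell(3-s)-(4-s)}$, which tends to infinity only for $s<3$ and tends to $0$ for $3<s<4$. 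Reinstating the integral term does not rescue the full range either: for $\mathds{Z}_p$ the limiting integral $\int_{\mathds{Z}_p\setminus\{z\}}d(z,y)^{s-4}(1-p^{-1})^{-1}\,d\mu_\tau(y)$ equals $\sum_{k\ge0}p^{k(3-s)}$ up to a constant, hence is finite precisely when $s>3$; so for $3<s<4$ the eigenvalues $\lambda_v$ accumulate at a finite positive value, $e^{-t\lambda_v}$ accumulates away from $0$, and by your own (correct) compactness criterion the conclusion itself fails in that range, not merely your argument for it. The divergence of the eigenvalues is therefore a substantive additional input (valid, e.g., for $s<3$ on $\mathds{Z}_p$, where the semigroup is indeed trace class), and neither your proposed lower bound nor the paper's one-line assertion supplies it on the stated range $s<4$.
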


\begin{proof}
If $s<4$, then $e^{-t\mathcal{D}^s}$ is clearly of trace class for $t>0$. Hence, the operators operate compactly on the Hilbert space $L^2(C,\mu_\tau)$ in this case. 
\end{proof}

Define the following Sobolev space:
\[
W^{1,2}(C)=\mathset{f\in L^2(C,\mu_\tau)\colon \norm{\mathcal{D}^sf}_{L^2(C,\mu_\tau)}<\infty}
\]
for $s\in\mathds{R}$.
The Sobolev norm on $W^{1,2}(C)$ is given by
\[
\norm{f}_{W^{1,2}(C)}=\left(\norm{f}_{L^2(C,\mu_\tau)}^2+\norm{\mathcal{D}^sf}_{L^2(C,\mu_\tau)}^2\right)^{\frac12}
\]
for $f\in W^{1,2}(C)$.

\begin{Proposition}
The Sobolev space $W^{1,2}(C)$ is a Hilbert space.
\end{Proposition}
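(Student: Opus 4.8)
The plan is to diagonalise $\mathcal{D}^s$ with the orthonormal eigenbasis furnished by Theorem~\ref{LocalCoordinateSpectrum} and thereby identify $W^{1,2}(C)$ isometrically with a weighted $\ell^2$-space, whose completeness is standard. First I would fix the orthonormal basis $\mathset{e_\alpha}_{\alpha\in A}$ of $L^2(C,\mu_\tau)$ consisting of the ultrametric wavelets $\psi_{v,j}$ together with the normalised constant function, and let $\lambda_\alpha\ge 0$ be the corresponding eigenvalue of $\mathcal{D}^s$ (so $\lambda_\alpha=0$ for the constant, and $\lambda_\alpha=\lambda_v$ from Corollary~\ref{LocalCoordinateEigenvalue} when $e_\alpha=\psi_{v,j}$). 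For $f\in L^2(C,\mu_\tau)$ with expansion $f=\sum_\alpha c_\alpha e_\alpha$, the self-adjointness and pure point spectrum from Theorem~\ref{LocalCoordinateSpectrum} give $\mathcal{D}^sf=\sum_\alpha\lambda_\alpha c_\alpha e_\alpha$ whenever the right-hand side converges in $L^2$, and the defining condition $\norm{\mathcal{D}^sf}_{L^2(C,\mu_\tau)}<\infty$ for membership in $W^{1,2}(C)$ is then equivalent to $\sum_\alpha\lambda_\alpha^2\absolute{c_\alpha}^2<\infty$. Consequently
\[
\norm{f}_{W^{1,2}(C)}^2=\sum_{\alpha\in A}\left(1+\lambda_\alpha^2\right)\absolute{c_\alpha}^2 ,
\]
so the coefficient map $f\mapsto(c_\alpha)_\alpha$ is an isometric isomorphism of $W^{1,2}(C)$, with the inner product $\langle f,g\rangle_{L^2}+\langle\mathcal{D}^sf,\mathcal{D}^sg\rangle_{L^2}$, onto the weighted sequence space $\ell^2\bigl(A,(1+\lambda_\alpha^2)\bigr)$.

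Next I would record that $\ell^2\bigl(A,(1+\lambda_\alpha^2)\bigr)$ is a Hilbert space: the weights $1+\lambda_\alpha^2$ are strictly positive reals, so this is the usual weighted $\ell^2$ space with inner product $\langle(c_\alpha),(d_\alpha)\rangle=\sum_\alpha(1+\lambda_\alpha^2)c_\alpha\overline{d_\alpha}$, and its completeness follows by the same Cauchy-sequence argument as for $\ell^2$ itself. Pulling this back along the coefficient map shows $W^{1,2}(C)$ is complete, hence a Hilbert space. Equivalently one may phrase the argument operator-theoretically: $\mathcal{D}^s$, being self-adjoint by Theorem~\ref{LocalCoordinateSpectrum}, is a closed operator; its graph norm is precisely $\norm{\cdot}_{W^{1,2}(C)}$; and the domain of any closed operator, equipped with the graph norm, is complete. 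Since $W^{1,2}(C)$ is by definition that domain $\mathset{f\in L^2(C,\mu_\tau)\colon\mathcal{D}^sf\in L^2(C,\mu_\tau)}$, completeness is immediate.

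The one point that needs care — and the \emph{main obstacle}, though a mild one — is the passage from the definition of $\mathcal{D}^s$ on the test functions $\mathcal{D}(C)$ in Theorem~\ref{LocalCoordinateOperator} to a well-defined action on a general $f\in L^2(C,\mu_\tau)$, i.e.\ checking that the self-adjoint operator appearing in Theorem~\ref{LocalCoordinateSpectrum} really has $\mathset{f\in L^2\colon\norm{\mathcal{D}^sf}_{L^2}<\infty}$ as its domain and no spurious ambiguity among self-adjoint realisations. This is settled by noting that $\mathcal{D}(C)$ is dense, contains the eigenbasis, and $\mathcal{D}^s$ acts diagonally on it, so the diagonal extension $\sum_\alpha\lambda_\alpha c_\alpha e_\alpha$ is the unique self-adjoint extension; once this identification is in place, the completeness statement follows from the weighted-$\ell^2$ description above with no further work.
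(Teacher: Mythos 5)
Your proof is correct. It is hard to compare it line by line with the paper's own argument, because the paper gives no argument at all: it simply states that the proofs of Proposition 4.3 and Corollary 4.4 of \cite{ellipticBVP} ``carry over in a simplified manner''. What you have written is, in effect, the missing content of that remark, and it is a legitimate self-contained proof. Your route --- expanding $f$ in the orthonormal eigenbasis of Theorem \ref{LocalCoordinateSpectrum} and identifying $W^{1,2}(C)$ isometrically with the weighted space $\ell^2\bigl(A,(1+\lambda_\alpha^2)\bigr)$ --- buys an explicit description of the Sobolev space and makes completeness a one-line consequence of the completeness of weighted $\ell^2$; your alternative phrasing via the graph norm of a closed (self-adjoint) operator is even shorter and does not need the eigenbasis at all, only self-adjointness. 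You are also right to flag the one genuine subtlety, namely that $\mathcal{D}^s$ is defined in Theorem \ref{LocalCoordinateOperator} only on test functions and one must identify $W^{1,2}(C)$ with the domain of the self-adjoint realisation; your resolution (an operator acting diagonally with real eigenvalues on the dense span of an orthonormal eigenbasis is essentially self-adjoint, and its closure has exactly the maximal diagonal domain $\sum_\alpha\lambda_\alpha^2\absolute{c_\alpha}^2<\infty$) is the standard and correct one, and is a point the paper glosses over entirely. The only cosmetic remark is that you should also note that the $W^{1,2}$-norm is induced by the sesquilinear form $\langle f,g\rangle_{L^2}+\langle\mathcal{D}^sf,\mathcal{D}^sg\rangle_{L^2}$, so that one really obtains a Hilbert space and not merely a complete normed space; you mention this inner product in passing, which suffices.
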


\begin{proof}
The proofs of \cite[Proposition 4.3 and Corollary 4.4]{ellipticBVP}
carries over to the situation here in a  simplified manner.
\end{proof}

\begin{thm}\label{MarkovSemigroup}
The operator $\mathcal{D}^s$ is the generator of a strongly continuous
semigroup $e^{-t\mathcal{D}^s}$ on $W^{1,2}(C)$
for $t\ge0$, which satisfies the Markov property.
\end{thm}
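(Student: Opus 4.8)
The plan is to verify the three Hille–Yosida/Lumer–Phillips style hypotheses for the generator $-\mathcal{D}^s$ on the Hilbert space $W^{1,2}(C)$, together with the Markovian (sub-Markovian) property, by diagonalising in the wavelet eigenbasis. First I would note that by Corollary \ref{LocalCoordinateEigenvalue} and Theorem \ref{LocalCoordinateSpectrum} the operator $\mathcal{D}^s$ is self-adjoint and positive semi-definite on $L^2(C,\mu_\tau)$ with a complete orthonormal eigenbasis $\{\psi_{v,j}\}\cup\{\mu_\tau(C)^{-1/2}\}$ and non-negative eigenvalues $\lambda_v\ge0$. Writing $f=\sum c_{v,j}\psi_{v,j}+c_0\,\mu_\tau(C)^{-1/2}$, the candidate semigroup is defined spectrally by $e^{-t\mathcal{D}^s}f=\sum e^{-t\lambda_v}c_{v,j}\psi_{v,j}+c_0\,\mu_\tau(C)^{-1/2}$, and the Sobolev norm is $\|f\|_{W^{1,2}(C)}^2=\sum(1+\lambda_v^2)|c_{v,j}|^2+|c_0|^2$. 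Since $0\le e^{-t\lambda_v}\le 1$ and $\lambda_v e^{-t\lambda_v}\le\lambda_v$, each $e^{-t\mathcal{D}^s}$ maps $W^{1,2}(C)$ into itself with operator norm $\le1$ there, and the semigroup law $e^{-(t_1+t_2)\mathcal{D}^s}=e^{-t_1\mathcal{D}^s}e^{-t_2\mathcal{D}^s}$ and $e^{0}=\mathrm{id}$ are immediate from the scalar identities.

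Next I would establish strong continuity on $W^{1,2}(C)$: for fixed $f\in W^{1,2}(C)$,
\[
\|e^{-t\mathcal{D}^s}f-f\|_{W^{1,2}(C)}^2=\sum_{v,j}(1+\lambda_v^2)\,\bigl(1-e^{-t\lambda_v}\bigr)^2|c_{v,j}|^2,
\]
and since $(1-e^{-t\lambda_v})^2\le1$ and $(1+\lambda_v^2)|c_{v,j}|^2$ is summable, dominated convergence lets me pass $t\downarrow0$ inside the sum, where each term tends to $0$. Then I would identify the generator: the resolvent $(\mu+\mathcal{D}^s)^{-1}$ acts as multiplication by $(\mu+\lambda_v)^{-1}$ for $\mu>0$, which is bounded on both $L^2$ and $W^{1,2}(C)$, and a standard computation shows $\lim_{t\downarrow0}t^{-1}(e^{-t\mathcal{D}^s}f-f)=-\mathcal{D}^s f$ in $W^{1,2}(C)$ precisely on the domain $\{f\in W^{1,2}(C):\sum\lambda_v^2(1+\lambda_v^2)|c_{v,j}|^2<\infty\}$, which is dense; hence $-\mathcal{D}^s$ is the generator in the sense of strongly continuous semigroups.

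For the Markov property I would invoke that $\mathcal{D}^s$ arises from the Dirichlet form $Q_s$ (equivalently $-\Delta_s$) via the construction in Section 3 following \cite[Theorem 1.4.1]{Fukushima1980}, so the associated semigroup $e^{t\Delta_s}=e^{-t\mathcal{D}^s}$ is already known to be Markovian on $L^2(C,\mu_\tau)$; concretely one checks $0\le f\le 1$ implies $0\le e^{-t\mathcal{D}^s}f\le1$, which follows from the kernel representation $p_t(x,\cdot)$ (Corollary \ref{kernelRepresentation}) being a non-negative probability density, or directly from the jump-type integral form (\ref{LocalCoordinateLaplacian}) whose kernel $d(x,y)^{s-4}/(1-p_{x\wedge y}^{-1})\ge0$ is a pure jump kernel with no killing, so the minimal process is conservative. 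The main obstacle I anticipate is the domain bookkeeping: one must check that $W^{1,2}(C)$ is exactly the form domain (or at least a core-compatible space) on which $-\mathcal{D}^s$ generates a $C_0$-semigroup — in the unbounded regime $s<4$ the eigenvalues $\lambda_v$ grow, so $W^{1,2}(C)\subsetneq L^2$ and one needs the graph-norm completeness from the preceding Proposition together with the fact that the wavelet eigenbasis lies in $W^{1,2}(C)$ and spans a dense subspace of it; once that is in place, the spectral/diagonal argument above closes everything, and the only genuine subtlety is confirming that the Markovianity on $L^2$ descends to $W^{1,2}(C)$, which holds because $e^{-t\mathcal{D}^s}$ preserves $W^{1,2}(C)$ and agrees there with its $L^2$-action.
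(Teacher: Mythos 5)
Your proposal is correct in substance but takes a genuinely different route from the paper on the key point, the Markov property. For the semigroup-theoretic part (contraction, strong continuity, identification of the generator) you work entirely in the wavelet eigenbasis with explicit spectral calculus and dominated convergence; the paper instead argues strong continuity from the eigenvalues being bounded above, proves contractivity via an integral inequality on eigenfunctions, and defers the rest to the proof of Theorem 6.3 of the cited boundary-value-problem paper --- your version is more self-contained and handles the $W^{1,2}(C)$ domain bookkeeping more carefully than the paper does. For Markovianity the paper verifies the three conditions ($f\ge0\Rightarrow e^{-t\mathcal{D}^s}f\ge0$, $f\le1\Rightarrow e^{-t\mathcal{D}^s}f\le1$, $e^{-t\mathcal{D}^s}1_C=1_C$) directly, justifying the first two by an ``invariance of eigenfunctions under the action of $\mathds{F}_p^\times$'' (a claim that does not literally transfer to a general equitising Cantor set), and then devotes most of the proof to constructing an invariant distribution $\pi=\sum_\lambda\pi_\lambda$ on $W^{1,2}(C)$, which you do not attempt. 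Your primary route --- that $\mathcal{D}^s$ descends from the Dirichlet form $Q_s$ whose associated semigroup is already Markovian by Fukushima's Theorem 1.4.1, as the paper itself sets up in Section 3, supplemented by the positivity of the jump kernel in (\ref{LocalCoordinateLaplacian}) --- is arguably cleaner and avoids the paper's questionable eigenfunction-invariance argument. One caveat: your fallback appeal to the kernel representation of Corollary \ref{kernelRepresentation} would be circular, since that corollary is deduced in the paper \emph{from} Theorem \ref{MarkovSemigroup}; stick to the Dirichlet-form or maximum-principle justification. You also omit the invariant-measure construction, but that is not needed for the (sub-)Markov property as stated, so nothing essential is lost.
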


\begin{proof}
Since $-t\mathcal{D}^s$ acts on the Hilbert space $L^2(C,\mu_\tau)$ for $t\ge0$, and its eigenvalues are bounded from above (they are non-positive)  by Theorem \ref{LocalCoordinateSpectrum}, it follows  that $e^{-t\mathcal{D}^s}$ is a strongly continuous semigroup acting on $L^2(C,\mu_\tau)$ for $t\ge0$.

\smallskip
The semigroup $e^{-t\mathcal{D}^s}$ with $t\ge0$ is also a contraction semigroup, because
\[
\norm{\int_0^1 e^{-\tau\mathcal{D}^s}u\,d\tau}_{L^2}\le t\norm{u}_{L^2}\,,
\]
which holds true for eigenfunctions of $\mathcal{D}^s$, and then by Pythagoras for any $u\in L^2(C,\mu_\tau)$. The contraction semigroup property then follows by the same reasoning as in the proof of \cite[Theorem 6.3]{ellipticBVP}.

\smallskip
Following further the proof of that theorem, one observes first the conditions
\begin{align*}
f\ge0\;\text{a.e.}&\quad\Rightarrow\quad e^{-t\mathcal{D}^s}f\ge0\;\text{a.e.}
\\
f\le1\;\text{a.e.}&\quad\Rightarrow\quad e^{-t\mathcal{D}^s}f\le1\;\text{a.e.}
\\
e^{-t\mathcal{D}^s}1_C&=1_C\,,
\end{align*}
where the first two hold true due to the invariance of eigenfunctions under the action of $\mathds{F}_p^\times$, and the third property holds true, because $1_C$ is an eigenfunction of $\mathcal{D}^s$ corresponding to eigenvalue zero. 

\smallskip
Then, an invariant needs to be found on $W^{1,2}(C)$. Such a measure $\pi_\lambda$ exists for each eigenspace $E_\lambda$ of $\mathcal{D}^s$, because $E_\lambda$ is finite-dimensional and $\mathcal{D}^s$ is diagonal on $E_\lambda$.
Define
\[
\pi=\sum\limits_\lambda\pi_\lambda
\]
formally, and let 
\[
f=\sum\limits_{\lambda}f_\lambda\phi_\lambda
\]
be the eigendecomposition of $f\in\mathcal{D}(C)$ with $f_\lambda\in\mathds{C}$, and observe that
\[
e^{-t\mathcal{D}^s}\pi f
=\sum_\lambda e^{-t\mathcal{D}^s}\pi_\lambda f_\lambda\phi_\lambda
=\sum_\lambda\int_C f_\lambda\phi_\lambda\,d\pi_\lambda
=\int_Cf\,d\pi\,,
\]
which shows that $\pi$ is a distribution on $\mathcal{D}(C)$. In order to see that it is also one on $W^{1,2}(C)$, the method in the proof of \cite[Theorem 6.3]{ellipticBVP} simplifies here as
\[
\absolute{\int_Cf\,d\pi}^2
=\sum\limits_\lambda 
\absolute{\langle f_\lambda\phi_\lambda,\pi_\lambda\rangle}^2
=\sum\limits_\lambda
\lambda^2\absolute{f_\lambda}^2
\le \norm{f}_{W^{1,2}(C)}^2\,,
\]
which means that 
\[
\sum\limits_\lambda e^{-t\mathcal{D}^s}\pi_\lambda\in W^{1,2}(C)'
\]
is a distribution which coincides with
\[
e^{-t\mathcal{D}^s}\pi
\]
together with the identity
\[
\int_Cf\,d\pi=
\left(\sum\limits_\lambda e^{t\mathcal{D}^s}\pi_\lambda\right) f\,.
\]
Hence, $\pi$ is the distribution on $W^{1,2}(C)$ invariant under $e^{-t\mathcal{D}^s}$ for $t\ge0$. This proves the assertions.
\end{proof}

The analogue of \cite[Corollary 6.4]{ellipticBVP} holds true:

\begin{Cor}\label{kernelRepresentation}
The semigroup $e^{-t\mathcal{D}^s}$ with $t\ge0$ acting on $W^{1,2}(C)$ has a kernel representation $p_t(x,\cdot)$ for $t\ge0$, $x\in C$, i.e.\ the map $A\mapsto p_t(x,A)$ is a Borel measure and 
\[
\int_C p_t(x,dy)\,f(y)=e^{-t\mathcal{D}^s}f(x)
\]
for $f\in W^{1,2}(C)$, and $x\in C$.
\end{Cor}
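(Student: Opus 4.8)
The plan is to follow the proof of \cite[Corollary 6.4]{ellipticBVP}, using that by Theorem \ref{MarkovSemigroup} the family $e^{-t\mathcal{D}^s}$, $t\ge0$, is a strongly continuous Markovian semigroup. First I would note that a test function, being locally constant on the compact set $C$, is a finite linear combination of vertex indicators $1_{[v]}$, hence by Corollary \ref{LocalCoordinateEigenvalue} a finite linear combination of ultrametric wavelets and the constant function; therefore $e^{-t\mathcal{D}^s}$ maps $\mathcal{D}(C)$ into locally constant — in particular continuous — functions, and the three Markov inequalities established in the proof of Theorem \ref{MarkovSemigroup} make it a positivity-preserving, unital contraction there. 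Since $\mathcal{D}(C)$ is uniformly dense in $\mathcal{C}(C)$, this extends $e^{-t\mathcal{D}^s}$ to a positivity-preserving unital contraction semigroup on $\mathcal{C}(C)$. For fixed $t\ge0$ and $x\in C$ the functional $f\mapsto(e^{-t\mathcal{D}^s}f)(x)$ on $\mathcal{C}(C)$ is then linear, positive and unital, so the Riesz--Markov representation theorem on the compact space $C$ yields a unique Borel probability measure $p_t(x,\cdot)$ with $\int_C f\,dp_t(x,\cdot)=(e^{-t\mathcal{D}^s}f)(x)$ for all $f\in\mathcal{C}(C)$, in particular for all $f\in\mathcal{D}(C)$.

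Next I would settle measurability: for $f\in\mathcal{C}(C)$ the map $x\mapsto\int_C f\,dp_t(x,\cdot)=(e^{-t\mathcal{D}^s}f)(x)$ is continuous, hence Borel, and the collection of Borel sets $A$ with $x\mapsto p_t(x,A)$ Borel is a Dynkin system containing the open sets (approximate $1_A$ from below by continuous functions and use monotone convergence); by Dynkin's lemma it is the whole Borel $\sigma$-algebra, so $(x,A)\mapsto p_t(x,A)$ is a genuine Markov kernel. It then remains to upgrade the identity from $f\in\mathcal{D}(C)$ to $f\in W^{1,2}(C)$, with the right-hand side read through the continuous representative of $e^{-t\mathcal{D}^s}f$ when $t>0$ (and, for $t=0$, simply $p_0(x,\cdot)=\delta_x$).

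For $t>0$ I would distinguish the two regimes of Theorem \ref{LocalCoordinateSpectrum}. If $s<4$, the Section 4 Lemma gives that $e^{-t\mathcal{D}^s}$ is trace class, and a small strengthening of this estimate shows that $e^{-t\lambda_v}$ decays fast enough to dominate the wavelet sup-norms $\|\psi_{v,j}\|_\infty=\mu_\tau([v])^{-\frac12}$; hence for every $f\in L^2(C,\mu_\tau)\supseteq W^{1,2}(C)$ the eigenfunction expansion
\[
e^{-t\mathcal{D}^s}f=\langle f,1_C\rangle\,1_C+\sum_{v}\sum_{j}e^{-t\lambda_v}\langle f,\psi_{v,j}\rangle\,\psi_{v,j}
\]
converges uniformly to a continuous function, and approximating $f$ by its partial eigen-sums $f_N\in\mathcal{D}(C)$ — which converge to $f$ in $W^{1,2}(C)$ and whose images converge uniformly — passes the kernel identity to the limit. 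If $s\ge4$, then $\mathcal{D}^s$ is bounded with bounded symmetric kernel, hence Hilbert--Schmidt, so $R_t:=e^{-t\mathcal{D}^s}-\id$ converges in Hilbert--Schmidt norm and is represented by an $L^2$-kernel $q_t(x,y)$; one then sets $p_t(x,\cdot)=\delta_x+q_t(x,\cdot)\,d\mu_\tau$, where positivity of $q_t(x,\cdot)$ off the diagonal and total mass $1$ are forced by the Markov property of $e^{-t\mathcal{D}^s}$ together with the atomlessness of $(C,\mu_\tau)$, and the identity then holds for every $f\in L^2(C,\mu_\tau)=W^{1,2}(C)$ (interpreted $\mu_\tau$-a.e.\ in $x$).

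The main obstacle I anticipate is precisely this last reconciliation: the measure $p_t(x,\cdot)$ is produced from the action of $e^{-t\mathcal{D}^s}$ on $\mathcal{C}(C)$, whereas the Corollary asks for the kernel identity on $W^{1,2}(C)$, so for $t>0$ one must control $e^{-t\mathcal{D}^s}f$ pointwise rather than merely in $L^2$. This is where the extra regularity gained at positive time is indispensable — uniform convergence of the wavelet expansion when $s<4$, and the Hilbert--Schmidt perturbation-of-the-identity structure when $s\ge4$ — and where the absence of atoms in the Connes measure is used to recognise the jump part of $p_t(x,\cdot)$ as a genuine positive measure.
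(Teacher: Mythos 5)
Your proposal follows essentially the same route as the paper, whose entire proof is the single sentence that the argument of \cite[Corollary 6.4]{ellipticBVP} carries over; you simply fill in the details that the paper delegates to that citation, namely the Riesz--Markov construction of $p_t(x,\cdot)$ from the positivity-preserving unital contraction on $\mathcal{C}(C)$, the Dynkin-system measurability argument, and the passage from $\mathcal{D}(C)$ to $W^{1,2}(C)$. The only soft spot is the unproved ``small strengthening'' asserting that $e^{-t\lambda_v}$ dominates the wavelet sup-norms $\mu_\tau([v])^{-1/2}$ for $s<4$ (this needs the quantitative lower bound on $\lambda_v$ from Remark \ref{niceEigenvalue} and is not automatic from trace-class alone), but since the paper supplies no argument at all here, your reconstruction is, if anything, more complete than the original.
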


\begin{proof}
The proof of \cite[Corollary 6.4]{ellipticBVP} carries over to this case.
\end{proof}

\begin{thm}\label{LocalCoordinateHeatKernel}
Assume that $s<4$. Then the Markov semigroup $e^{-t\mathcal{D}^s}$ on $W^{1,2}(C)$ has a heat kernel function $H(t,x,y)\in L^2(C)\otimes L^2(C)$ for $t>0$, where
\[
H(t,x,y)=\sum\limits_\lambda e^{-\lambda t}\phi_\lambda(x)\overline{\phi_\lambda(y)}
\] 
with $x,y\in C$, $\lambda$ running through the eigenvalues of $\mathcal{D}^s$, and $\phi_\lambda$ an orthonormal eigenbasis in $\mathcal{D}(C)$ of $W^{1,2}(U)$.
\end{thm}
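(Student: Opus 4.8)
The plan is to realise the heat kernel as the Hilbert--Schmidt integral kernel of the self-adjoint semigroup $e^{-t\mathcal{D}^s}$, obtained from the spectral decomposition of $\mathcal{D}^s$, and to control the convergence of the defining series through the trace-class property already established for $s<4$. I would begin by invoking the spectral theorem for the self-adjoint, positive semi-definite operator $\mathcal{D}^s$ of Theorem \ref{LocalCoordinateSpectrum}: with $\{\phi_\lambda\}\subset\mathcal{D}(C)$ the orthonormal eigenbasis of $L^2(C,\mu_\tau)$ and $\mathcal{D}^s\phi_\lambda=\lambda\phi_\lambda$, one has
\[
e^{-t\mathcal{D}^s}f=\sum_\lambda e^{-\lambda t}\langle f,\phi_\lambda\rangle\,\phi_\lambda
\]
for all $f\in L^2(C,\mu_\tau)$ and $t\ge0$, the series converging in $L^2(C,\mu_\tau)$. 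Since the $\phi_\lambda$ are locally constant, each finite partial sum of $H(t,x,y)=\sum_\lambda e^{-\lambda t}\phi_\lambda(x)\overline{\phi_\lambda(y)}$ is a genuine locally constant function on $C\times C$.

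Next I would show that $H(t,\cdot,\cdot)$ converges in $L^2(C\times C,\mu_\tau\otimes\mu_\tau)\cong L^2(C,\mu_\tau)\otimes L^2(C,\mu_\tau)$ for every $t>0$. The products $\phi_\lambda\otimes\overline{\phi_\lambda}$ form an orthonormal system in $L^2(C\times C)$, so the partial sums of $H(t,\cdot,\cdot)$ are Cauchy in $L^2(C\times C)$ precisely when $\sum_\lambda e^{-2\lambda t}<\infty$; but $\sum_\lambda e^{-2\lambda t}=\Trace\bigl(e^{-2t\mathcal{D}^s}\bigr)$, which is finite for $t>0$ because, under the hypothesis $s<4$, the semigroup $e^{-t'\mathcal{D}^s}$ is of trace class for every $t'>0$ (the Lemma preceding the definition of $W^{1,2}(C)$, applied with $t'=2t$). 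Concretely this uses that, for $s<4$, the eigenvalues $\lambda_v$ of Corollary \ref{LocalCoordinateEigenvalue} accumulate only at $+\infty$ and the multiplicities are finite, so the associated Dirichlet series converges. Hence $H(t,\cdot,\cdot)$ defines an element of $L^2(C)\otimes L^2(C)$ for each $t>0$.

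Finally I would identify $H$ with the integral kernel of the semigroup. Expanding $f$ in the eigenbasis, using $\int_C\phi_\lambda(y)\overline{\phi_\mu(y)}\,d\mu_\tau(y)=\delta_{\lambda\mu}$ together with the $L^2(C\times C)$-convergence just proven (and Cauchy--Schwarz to justify interchanging the sum with the $y$-integral), one gets
\[
\int_C H(t,x,y)\,f(y)\,d\mu_\tau(y)=\sum_\lambda e^{-\lambda t}\langle f,\phi_\lambda\rangle\,\phi_\lambda(x)=e^{-t\mathcal{D}^s}f(x)
\]
for $\mu_\tau$-a.e.\ $x$; by density of $\mathcal{D}(C)$ in $W^{1,2}(C)$ this extends to all $f\in W^{1,2}(C)$ and shows that $H(t,x,y)\,d\mu_\tau(y)$ coincides with the measure $p_t(x,dy)$ of Corollary \ref{kernelRepresentation}. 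The symmetry $H(t,x,y)=\overline{H(t,y,x)}$, the positivity/normalisation inherited from the Markov property, and the Chapman--Kolmogorov identity $\int_C H(t,x,z)H(t',z,y)\,d\mu_\tau(z)=H(t+t',x,y)$ then follow formally from orthonormality of $\{\phi_\lambda\}$. The only step that carries real content is the trace-class bound $\sum_\lambda e^{-2\lambda t}<\infty$, i.e.\ the summability of $e^{-2\lambda t}$ over the eigenvalues of Corollary \ref{LocalCoordinateEigenvalue} with multiplicity when $s<4$; once that is in hand, the rest is a routine consequence of the spectral theorem.
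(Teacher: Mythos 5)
Your proposal is correct and follows essentially the same route as the paper: both arguments rest on the trace-class property of $e^{-t\mathcal{D}^s}$ for $s<4$, deduce $\sum_\lambda e^{-2\lambda t}<\infty$, and conclude that the eigenfunction expansion of $H(t,\cdot,\cdot)$ converges in $L^2(C,\mu_\tau)^{\otimes 2}$. The extra details you supply (identification with the kernel of Corollary \ref{kernelRepresentation}, symmetry, Chapman--Kolmogorov) go slightly beyond what the paper writes out, but the essential content is identical.
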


\begin{proof}
First observe that the ultrametric wavelets 
belong to $\mathcal{D}(C)$ and together with the normalised constant function form an eigenbasis of $L^2(C,\mu_\tau)$ according to Corollary \ref{LocalCoordinateEigenvalue} and Theorem \ref{LocalCoordinateSpectrum}.

\smallskip
Since $s<4$, it 
further follows that 
\[
H(t,x,x)=\Trace
\left(e^{-t\mathcal{D}^s}\right)<\infty
\]
for $x\in C$, $t>0$. Assume now that $x\neq y$, $t>0$. Since
\begin{align}\label{KozyrevBound}
\absolute{\phi_\lambda(x)
\overline{\phi_\lambda(y)}}\le \mu_\tau(C)^{-1}\,,
\end{align}
it also follow that
\[
\absolute{H(t,x,y)}\le \mu_\tau(C)^{-1}\Trace\left(e^{-t\mathcal{D}^s}\right)<\infty
\]
for $x\neq y$ in $C$. This shows that $H(t,x,y)$ is well-defined for $t>0$. In fact, 
\[
\norm{H(t,\cdot,\cdot)}_{L^2(C,\mu_\tau)^{\otimes2}}
=
\sum\limits_{\lambda}e^{-2\lambda t}<\infty
\]
shows that $H(t,\cdot,\cdot)\in L^2(C,\mu_\tau)^{\otimes 2}$ for $t>0$ in this case, as asserted.
\end{proof}

\begin{Cor}\label{GreenFunction}
Assume that $s<4$. 
Then the Green function associated with $-\mathcal{D}^s$ exists and has the form
\[
G(x,y)=\sum\limits_{\lambda>0}\lambda^{-1}\phi_\lambda(x)\overline{\phi_\lambda(y)}
\]
for $x,y\in C$, provided the sum
\[
\sum\limits_{\lambda>0}\lambda^{-1}
\]
over the positive eigenvalues of $\mathcal{D}^s$ is convergent.
\end{Cor}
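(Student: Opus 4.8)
The plan is to obtain the Green function as the resolvent of $\mathcal{D}^s$ at zero, restricted to the orthogonal complement of the kernel of $\mathcal{D}^s$, and then verify that the proposed series converges to a genuine kernel. Concretely, I would first recall from Theorem \ref{LocalCoordinateSpectrum} and Corollary \ref{LocalCoordinateEigenvalue} that, since $s<4$, the operator $\mathcal{D}^s$ has a pure point spectrum with an orthonormal eigenbasis $\{\phi_\lambda\}$ in $\mathcal{D}(C)$, where the only eigenvalue that is not strictly positive is $\lambda=0$, whose eigenspace is spanned by the normalised constant function. Hence on the closed subspace $L^2_0(C,\mu_\tau)=\{f:\int_C f\,d\mu_\tau=0\}$ the operator $\mathcal{D}^s$ is invertible in the sense of unbounded self-adjoint operators, with $(\mathcal{D}^s)^{-1}$ diagonalised by the same eigenbasis and eigenvalues $\lambda^{-1}$, $\lambda>0$.

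Next I would define $G(x,y)=\sum_{\lambda>0}\lambda^{-1}\phi_\lambda(x)\overline{\phi_\lambda(y)}$ and check it defines an element of $L^2(C,\mu_\tau)^{\otimes 2}$: by the uniform bound $|\phi_\lambda(x)\overline{\phi_\lambda(y)}|\le\mu_\tau(C)^{-1}$ used in the proof of Theorem \ref{LocalCoordinateHeatKernel}, together with the assumed convergence of $\sum_{\lambda>0}\lambda^{-1}$, the series converges absolutely and uniformly in $(x,y)$, so $G$ is a bounded continuous function and in particular square-integrable on the compact space $C\times C$. Orthonormality of the $\phi_\lambda$ then gives $\|G\|_{L^2(C,\mu_\tau)^{\otimes 2}}^2=\sum_{\lambda>0}\lambda^{-2}<\infty$, which follows from $\sum_{\lambda>0}\lambda^{-1}<\infty$ since the positive eigenvalues are bounded below (their sole accumulation point would be $0$ only in the bounded case $s\ge 4$, so for $s<4$ they are bounded away from $0$; in any case $\lambda^{-2}\le(\inf\lambda)^{-1}\lambda^{-1}$).

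Finally I would verify the defining property of the Green function: for $f\in\mathcal{D}(C)$ with $\int_C f\,d\mu_\tau=0$, writing $f=\sum_{\lambda>0}f_\lambda\phi_\lambda$, the integral operator with kernel $G$ sends $f$ to $\sum_{\lambda>0}\lambda^{-1}f_\lambda\phi_\lambda$, which lies in $W^{1,2}(C)$ and is mapped by $\mathcal{D}^s$ back to $f$; thus $G$ represents $(\mathcal{D}^s)^{-1}$ on $L^2_0$, i.e. it is the Green function of $-\mathcal{D}^s$ in the usual sense (up to the harmless sign, absorbed into the statement). The main obstacle is purely bookkeeping around the kernel eigenvalue $\lambda=0$: one must restrict to mean-zero functions (equivalently, interpret the Green function as the pseudo-inverse), and one must be careful that the series defining $G$ is summed only over $\lambda>0$ so that no division by zero occurs; once this is set up, every remaining step is a routine consequence of the spectral decomposition already established.
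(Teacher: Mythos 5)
Your proposal is correct and rests on the same ingredients as the paper's proof: the orthonormal eigenbasis from Corollary \ref{LocalCoordinateEigenvalue} and Theorem \ref{LocalCoordinateSpectrum}, the uniform bound $\absolute{\phi_\lambda(x)\overline{\phi_\lambda(y)}}\le\mu_\tau(C)^{-1}$ from (\ref{KozyrevBound}), and the hypothesis $\sum_{\lambda>0}\lambda^{-1}<\infty$. The only cosmetic difference is that the paper obtains $G(x,y)$ as $\int_0^\infty h(t,x,y)\,dt$ with $h$ the heat kernel stripped of its zero mode, whereas you write down the spectral pseudo-inverse directly and additionally verify the inversion property on mean-zero functions; since $\int_0^\infty e^{-t\lambda}\,dt=\lambda^{-1}$, these are the same computation.
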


\begin{proof}
The expression for $G(x,y)$ comes from
\[
G(x,y)=\int_0^\infty h(t,x,y)\,dt\,,
\]
where
\[
h(t,x,y)=\sum\limits_{\lambda>0}e^{-t\lambda}\phi_\lambda(x)\overline{\phi_\lambda(y)}\,,
\]
where $\lambda$ runs through the positive eigenvalues of $\mathcal{D}^s$ for $s<4$. By the convergence assumption for the sum of inverse eigenvalues, it now follows the Green function exists for $x,y\in C$, because also of the bound (\ref{KozyrevBound}). This proves the assertion.
\end{proof}

\begin{remark}\label{niceEigenvalue}
The ultrametric wavelet eigenvalue $\lambda_v$ can also be written as
\[
\lambda_v=\frac{d(v,v^\sigma)^{s-4}}{(1-p_{v\wedge v^\sigma}^{-1})s_v} + \int_{C\setminus[v]}
\frac{d(z,y)^{s-4}}{1-p_{y\wedge z}^{-1}}\,d\mu_\tau(y)
\]
with the notation from Corollary \ref{LocalCoordinateEigenvalue}. This follows from a simple calculation. Hence,
\[
\lambda_v\in O\left(s_v^{-1}d(v,v^\sigma)^{s-4}\right)
\]
assymptotically for $v\to z\in C$.
This means that for $s<4$,  the existence of a Green function according to Corollary \ref{GreenFunction} can be guaranteed in many cases. Such a  case occurs e.g.\ if, but not only if, the number $s_v$ of children in $T(C)$ is bounded, but also in the case of the Baire distance
\[
d(x,y)=2^{-\ell(x\wedge y)}
\]
with $\ell(x\wedge y)$ being the length of the largest common prefix of $x,y\in C$ in the rooted tree $T(C)$, and
where $s<4$ is such that the growth of $s_v$ for vertex $v$ approaching $\partial T(C)$ can be overcompensated. 
\end{remark}





\section*{Acknowledgements}
David Weisbart, \'Angel Mor\'an Ledezma, Photini Halwas and Wilson Z\'u\~niga-Galindo are thanked for fruitful discussions.
This research is partially funded by the Deutsche Forschungsgemeinschaft under 
project number 469999674.

\bibliographystyle{plain}
\bibliography{biblio}

\end{document}